\newcommand\CA{{\mathcal A}}
\newcommand\CB{{\mathcal B}}
\newcommand\BBC{{\mathbb C}}
\newcommand\BBR{{\mathbb R}}
\newcommand\codim{\operatorname{codim}}
\newcommand\id{{\textrm{id}}}
\newcommand\Ind{{\operatorname{Ind}}}
\newcommand\GL{\operatorname{GL}}
\newcommand\inverse{^{-1}}
\renewcommand\th{{^{\text{th}}}}
\newcommand\Av{\operatorname{Av}}
\newcommand\spn{\operatorname{span}}
\numberwithin{equation}{section}
\theoremstyle{plain}
\newtheorem{lemma}[equation]{Lemma}
\newtheorem{theorem}[equation]{Theorem}
\newtheorem{corollary}[equation]{Corollary}
\thanks{The authors would like to thank their charming wives for their
  unwavering support during the preparation of this paper.}
\keywords{Coxeter groups, Orlik-Solomon algebras, arrangements of
  hyperplanes, hyperplane complements}
\subjclass[2010]{Primary 20F55; Secondary 05E10, 52C35}
\title [Invariants of the Cohomology of Complements of Coxeter arrangements]
{On the Invariants of the Cohomology of Complements of Coxeter arrangements}
\dedicatory{Dedicated to Michel Brou\'e}
\author[J.M. Douglass]{J. Matthew Douglass}
\address{Division of Mathematical Sciences,
National Science Foundation,
2415 Eisenhower Ave,
Alexandria, VA 22314, USA}
\email{mdouglas@nsf.gov}
\author[G. Pfeiffer]{G\"otz Pfeiffer} \address{School of Mathematics,
  Statistics and Applied Mathematics, National University of Ireland,
  Galway, University Road, Galway, Ireland}
\email{goetz.pfeiffer@nuigalway.ie}
\author[G. R\"ohrle]{Gerhard R\"ohrle} \address {Fakult\"at f\"ur
  Mathematik, Ruhr-Universit\"at Bochum, D-44780 Bochum, Germany}
\email{gerhard.roehrle@rub.de}
\begin{document}

\begin{abstract}
  We refine Brieskorn's study of the cohomology of the complement of
  the reflection arrangement of a finite Coxeter group $W$.  As a result we
  complete the verification of a conjecture by Felder and Veselov that
  gives an explicit basis of the space of $W$-invariants in this
  cohomology ring.
\end{abstract}

\maketitle
\allowdisplaybreaks

%%%%%%%%%%%%%%%%%%%%%%%%%%%%%%%%%%%%%%%%%%%%%%%%%%%%%%%%%%%%%%%%%%%%%%
%%%%%%%%%%%%% article body...
%%%%%%%%%%%%%%%%%%%%%%%%%%%%%%%%%%%%%%%%%%%%%%%%%%%%%%%%%%%%%%%%%%%%%%

%%%%%%%%%%%%%%%%%%%%%%%%%%%%%%%%%%%%%%%%%%%%%%%%%%%%%%%%%%%%%%%%%%%%%%
%%%%%%%%%%%%% \S1 Introduction
%%%%%%%%%%%%%%%%%%%%%%%%%%%%%%%%%%%%%%%%%%%%%%%%%%%%%%%%%%%%%%%%%%%%%%
\section{Introduction}

Suppose that $W$ is a finite Coxeter group with Coxeter generating set $S$
of size $|S|=l$. Let $V_{\BBR}$ be an $l$-dimensional, real vector space
that affords the reflection representation of $W$. Let $V=\BBC\otimes
_{\BBR} V_{\BBR}$ be the complexification of $V_{\BBR}$ and consider $W$ as
a subgroup of the group $\GL(V)$ of invertible $\BBC$-linear transformations
of $V$. Let $R$ denote the set of reflections in $W$. For each $r\in R$ let
$V^r$ denote the hyperplane of fixed points of $r$ in $V$, and set $\CA=\{\,
V^r\mid r\in R\,\}$. Then $(V, \CA)$ is the complexification of a Coxeter
arrangement.

The group $W$ acts naturally on the complement $M_W= V\setminus
\bigcup_{r\in R} V^r$ of the hyperplanes in $\CA$, and hence on the
cohomology of $M_W$ as algebra automorphisms. For $p\geq0$ let $H^p(M_W)$
denote the $p\th$ de Rahm cohomology space of $M_W$ with complex
coefficients and let $H^*(M_W)= \bigoplus_{p\geq 0} H^p(M_W)$ denote the
total cohomology of $M_W$. Felder and Veselov \cite{felderveselov:coxeter}
have conjectured an explicit construction of $H^p(M_W)^W$, the space of
$W$-invariants in $H^p(M_W)$, in terms of so-called special
involutions. They have verified their conjecture for all Coxeter groups
except those with irreducible components of type $E_7$, $E_8$, $F_4$, $H_3$,
or $H_4$.

In this note we complete the proof of the conjecture of Felder-Veselov by
reducing the problem to a computation in $H^{l}(M_W)$, implementing this
computation in the computer algebra system GAP3 (for any $W$), and then
performing the required calculations (for the remaining exceptional groups);
thus verifying that the conjecture is true. In \S\ref{sec:2} we give more
background and state the main result, and in \S\ref{sec:3} we describe some
novel algorithmic aspects of the implementation of the relations for the
Orlik-Solomon algebra of $W$ used to complete the calculations on which the
main results rely.

%%%%%%%%%%%%%%%%%%%%%%%%%%%%%%%%%%%%%%%%%%%%%%%%%%%%%%%%%%%%%%%%%%%%%%
%%%%%%%%%%%%% \S2 Background and main results
%%%%%%%%%%%%%%%%%%%%%%%%%%%%%%%%%%%%%%%%%%%%%%%%%%%%%%%%%%%%%%%%%%%%%%

%%%%%%%%%%%%%%%%%%%%%%%%%%%%%%%%%%%%%%%%%%%%%%%%
\section{Background and main results} \label{sec:2}
%%%%%%%%%%%%%%%%%%%%%%%%%%%%%%%%%%%%%%%%%%%%%%%%

In order to state the Felder-Veselov conjecture precisely, we need
additional notation.

%%%%%%%%%%%%%%%%%%%%%%%%%%%%%%%%%%%%%%%%%%%%%%%%
\subsection{The Felder-Veselov conjecture}

Let $\Phi \subseteq V_{\BBR}$ be a root system for $W$ as in
\cite [\S1.1] {geckpfeiffer:characters} and let $\{\, \alpha_s\mid s\in
S\,\}$ be the roots corresponding to elements of $S$.

Orlik and Solomon \cite{orliksolomon:combinatorics} give a
combinatorial presentation of the cohomology algebra $H^*(M_W)$ that is
suitable for machine computation. The \emph{Orlik-Solomon algebra of $W$} is
the $\BBC$-algebra $A(W)$ with generators $\{\, a_r\mid r\in R\}$ and
relations
\begin{itemize}
\item $a_{r} a_{s}= -a_{s} a_{r}$ for $r,s\in R$ and
\item whenever $\{V^{r_1}, \dots, V^{r_p}\}$ is linearly dependent for
  $r_1, \dots, r_p \in R$, we have
  \[
  \sum_{i=1}^p (-1)^i a_{r_1} \dotsm \widehat{ a_{r_i}} \dotsm a_{r_p} =0,
  \]
\end{itemize}
where the notation $\widehat{ a_{r_i}}$ indicates omission of the term $a_{r_i}$.
The algebra $A(W)$ is
naturally graded with $A^p(W)$ equal to the span of all $a_{r_1} \dotsm
a_{r_p}$ such that $\codim V^{r_1}\cap \dots\cap V^{r_p}= p$. The rule $(w,
a_r)\mapsto a_{wrw\inverse}$, for $w\in W$ and $r\in R$, extends to an
action of $W$ on $A(W)$ as degree-preserving, algebra automorphisms. Orlik
and Solomon show that the rule $a_s\mapsto d\alpha_s^{\vee}/
\alpha_s^{\vee}$, where $s\in S$ and $\alpha_s^{\vee} \in V^*$ denotes the
extension of the coroot $\alpha_s^{\vee}\in V_{\BBR}^*$, extends to a
$W$-equivariant isomorphism of graded $\BBC$-algebras $A(W) \cong
H^*(M_W)$. See \cite{orlikterao:arrangements} for details. In the following
we work with the Orlik-Solomon algebra of $W$.

Fix an arbitrary linear order on $R$, say $R = \{\, r_1, \dots, r_n\, \}$.
For any subset $T = \{\, r_{i_1}, \dots, r_{i_p} \,\}$ of $R$
with $i_1 < \dots < i_p$, define
\[
  a_T := a_{i_1} \cdots a_{i_p} \in A(W).
\]

For an involution $t \in W$ there is a direct sum decomposition $V_{\BBR}
\cong V_1 \oplus V_{-1}$, where $V_1$ and $V_{-1}$ are the $1$- and
$(-1)$-eigenspaces of $t$, respectively. Define $\Phi_{1} = \Phi \cap V_{1}$
and $\Phi_{-1} = \Phi \cap V_{-1}$. Following \cite{felderveselov:coxeter}, we
say that $t$ is \emph{special}, if for any root $\alpha \in \Phi$ at least
one of its projections onto $V_{1}$ or $V_{-1}$ is proportional to a root in
$\Phi_{1}$ or $\Phi_{-1}$, respectively. Clearly, this definition does not
depend on the choice of root system for $W$.

Suppose $t$ is a special involution and $p=\dim V_{-1}$. Then $\Phi_{-1}$ is
a root system in $V_{-1}$. Choose a base of $\Phi_{-1}$ and let $S(t)\subseteq
R$ be the reflections in $W$ corresponding to the roots in this base.
Then $a_{S(t)} \in A^p(W)$.
Let
\[
  \Av\colon A(W) \to A(W)^W
\]
be the averaging map,
where $A(W)^W$ is the space of $W$-invariants of $A(W)$. Felder and
Veselov \cite{felderveselov:coxeter} make the following conjecture that we
state as a theorem.

\begin{theorem} \label{thm:fvc} Let $W$ be a finite Coxeter group.  Then
  \begin{enumerate}
  \item for any special involution $t$ of $W$, the element $\Av(a_{S(t)}) \in A(W)^W$ is non-zero, 
  and \label{it:1}
  \item any element in $A(W)^W$ is a linear combination of elements $\Av(a_{S(t)})$ from \eqref{it:1}. \label{it:2}
  \end{enumerate}
\end{theorem}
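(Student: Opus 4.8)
The plan is to reduce both parts to a verifiable statement about a single top-degree cohomology space, which can then be checked by machine for the outstanding exceptional types.

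First, for part (1), I would exploit the product structure of the Orlik–Solomon algebra together with Brieskorn's decomposition. Since $t$ is a special involution with $p = \dim V_{-1}$, the reflections in $S(t)$ correspond to a base of the root system $\Phi_{-1}$ sitting in $V_{-1}$. The key observation is that $a_{S(t)}$ lives in the top-degree piece $A^p(W_{-1})$ of the Orlik–Solomon subalgebra attached to the parabolic (or reflection) subgroup generated by $S(t)$, which acts on $V_{-1}$ as its full reflection arrangement. In top degree, the Orlik–Solomon algebra of an essential arrangement is one-dimensional on each irreducible component, and the product $a_{S(t)}$ of the simple reflections is a nonzero generator there. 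To show $\Av(a_{S(t)})\neq 0$ in $A(W)^W$, I would pass through the isomorphism $A(W)\cong H^*(M_W)$ and pair against a suitable cycle, or equivalently use that the restriction map from $A(W)$ to $A(W_{-1})$ (localization at the flat $V_{-1}$) sends the $W$-orbit sum of $a_{S(t)}$ to a nonzero multiple of $a_{S(t)}$ itself, since distinct $W$-translates of the flat $V_{-1}$ do not contribute to this localization in the relevant degree. The specialness hypothesis is exactly what guarantees the projection-compatibility needed for this localization argument to be clean.

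Second, for part (2), the strategy is a dimension count. By Orlik–Solomon–Terao, $\dim A^p(W)^W$ equals the multiplicity of the trivial representation in $H^p(M_W)$, which by Brieskorn's theorem is computable as a sum over $W$-orbits of flats $X$ of dimension of the $W$-fixed part of the induced-up top cohomology of the localized arrangement at $X$. This is a purely combinatorial quantity. I would show that the special involutions $t$ with $\dim V_{-1} = p$, taken up to $W$-conjugacy, index a spanning set of size matching this dimension: the elements $\Av(a_{S(t)})$ are linearly independent because their localizations at the various flats $V_{-1}$ have disjoint support (this reuses the mechanism from part (1)), and their number equals $\dim A^p(W)^W$. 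The latter numerical equality is the crux and, for types $A$, $B$, $D$, and the dihedral and $H_3/A_3$-small cases, is already established in Felder–Veselov; for $E_7, E_8, F_4, H_3, H_4$ it is confirmed by the GAP3 computation described in \S\ref{sec:3}, which implements the Orlik–Solomon relations and computes $\dim A^p(W)^W$ directly, together with the ranks of the matrices expressing the $\Av(a_{S(t)})$ in a basis of $A^p(W)$.

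The main obstacle is the faithful and efficient implementation of the Orlik–Solomon relations for the large exceptional groups: the number of reflections $n = |R|$ and the combinatorial explosion of dependent subsets $\{V^{r_1},\dots,V^{r_p}\}$ make a naive approach infeasible for $E_8$ (where $n = 120$ and $l = 8$). I expect the real work to lie in organizing these relations — via the "nbc" (no-broken-circuit) normal form and a careful traversal of the lattice of flats — so that linear algebra over $\BBC$ (or a suitable number field for $H_3, H_4$) in the top degree $A^l(W)$ remains tractable; the $W$-equivariance can be built in by working orbit-by-orbit on flats. Once the matrices are assembled, verifying that the $\Av(a_{S(t)})$ have the predicted rank is routine linear algebra. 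The reduction to degree $p = l$ is justified by the Poincaré-duality-type symmetry of the invariants together with the fact that every special involution of a proper parabolic contributes, so it suffices to treat the "essential" special involutions with $V_{-1}$ of full dimension after restricting to parabolics.
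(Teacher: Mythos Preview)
Your localization sketch for part (1) has a genuine gap. When you project $\Av(a_{S(t)})$ to the component of $A(W)$ supported on the flat $X_I=\bigcap_{s\in I}V^s$ with $I=S(t)$, the $W$-translates landing on this flat are exactly those coming from the full normalizer $N_I=N_W(W_I)$, not just the identity; so what survives is the $N_I$-average of $a_I$, and the whole problem is to show that this is nonzero. Two independent ingredients are needed. First, the $W_I$-average of $a_I$ in $A^{|I|}(W_I)$ must be nonzero. This is not automatic: your assertion that the top degree of an essential Orlik--Solomon algebra is one-dimensional is false (for a Coxeter arrangement $\dim A^l(W)=\prod_i m_i$), and it is precisely this nonvanishing, for $W$ irreducible of type $E_7$, $E_8$, $F_4$, $H_3$, $H_4$ with $w_S=-\id$, that the GAP3 computation in \S\ref{sec:3} certifies. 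Second, the resulting $W_I$-invariant must be fixed by Howlett's complement $C_I$ of $W_I$ in $N_I$. The paper invokes the theorem of Pfeiffer--R\"ohrle that, under the $(-1)$-condition on $I$, $w_I$ is special in $W$ if and only if $C_I$ centralizes $W_I$; when $C_I$ does not centralize $W_I$, a graph automorphism in $C_I$ transposes two simple reflections, hence sends $a_I$ to $-a_I$ and kills the invariant. Your phrase ``specialness guarantees projection-compatibility'' is inserted at exactly the point where this structural input is required, without supplying any mechanism.

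For part (2) the paper does not compute ranks of the matrices of the $\Av(a_{S(t)})$. The dimension of $A(W)^W$ is already known from Brieskorn, and the linear independence of the $\Av(a_I)$ is read off from the Lehrer--Solomon decomposition $A^p(W)\cong\bigoplus_{\lambda}\Ind_{N_{I_\lambda}}^W A(W)_{I_\lambda}$ via Frobenius reciprocity, once the analysis of $A^{|I|}(W_I)^{N_I}$ above is complete. Richardson's theorem, that every involution in $W$ is conjugate to $w_I$ for some $I$ satisfying the $(-1)$-condition, then identifies the nonzero summands with conjugacy classes of special involutions. Thus the sole computational input is the single top-degree check $\Av(a_S)\neq 0$ for each exceptional $W$ with $w_S=-\id$, not a rank computation across all degrees as you propose.
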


More precisely, if $m$ is the number of conjugacy classes of special
involutions in $W$, then it follows from the theorem
(and the observation that, for $w \in W$, up to a sign
$\Av(a_{S(wtw\inverse)}) = \Av(a_{S(t)})$) that $\dim
A(W)^W \leq m$. For each irreducible finite Coxeter group $W$,
Brieskorn \cite[Thm.~7]{brieskorn:tresses} has computed the
Betti numbers of the manifold $M_W/W$ and thus the Poincar\'e polynomial of
$A(W)^W$. It turns out that $\dim A(W)^W=m$ and so the next corollary is an
immediate consequence of the theorem.

\begin{corollary}\label{cor:fvc}
  Suppose $W$ is a finite Coxeter group and $\{t_1, \dots, t_m\}$ is a set
  of representatives of the conjugacy classes of special involutions in
  $W$. Then $\{\Av(a_{S(t_1)}), \dots, \Av(a_{S(t_m)}) \}$ is a basis of
  $A(W)^W$.
\end{corollary}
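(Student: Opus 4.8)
The plan is to reduce the assertions of Theorem~\ref{thm:fvc}, for the Coxeter types not already settled in \cite{felderveselov:coxeter} (those with an irreducible component of type $E_7$, $E_8$, $F_4$, $H_3$, or $H_4$), to a nonvanishing statement in a single \emph{top-degree} Orlik--Solomon algebra, and then to verify that statement by computer. Two easy normalisations come first. If $W = W_1 \times \dots \times W_k$ with the $W_i$ irreducible, then $A(W) \cong A(W_1) \otimes \dots \otimes A(W_k)$ as graded $W$-modules, $\Av$ is the tensor product of the averaging maps, and every special involution of $W$ factors as $t = t_1 \cdots t_k$ with $a_{S(t)}$ corresponding to $a_{S(t_1)} \otimes \dots \otimes a_{S(t_k)}$; so both parts of the theorem for $W$ follow from the corresponding parts for the $W_i$. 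Hence it suffices to treat $W$ irreducible of type $E_7$, $E_8$, $F_4$, $H_3$, or $H_4$, and — via the same factorisation — to have the theorem in hand for every proper parabolic of such a $W$, whose irreducible factors are of types covered by \cite{felderveselov:coxeter} apart from type $E_7$ (inside $E_8$) and type $H_3$ (inside $H_4$), which are themselves on the list. Recall also that the pointwise stabiliser in $W$ of any intersection $X$ of hyperplanes of $\CA$ is a parabolic subgroup $W_X$, generated by the reflections fixing $X$, with setwise stabiliser $N_W(W_X)$.

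The structural tool is the Brieskorn decomposition $A^p(W) = \bigoplus_X A_X$, the sum being over intersections $X$ of hyperplanes with $\codim X = p$, where $A_X$ is spanned by the degree-$p$ monomials $a_T$ with $\bigcap_{r \in T} V^r = X$. This is a $W$-module decomposition with $w \cdot A_X = A_{wX}$, and $A_X$ is identified $N_W(W_X)$-equivariantly with the top component $A^p(W_X) = H^p(M_{W_X})$ of the Orlik--Solomon algebra of $W_X$ acting on $X^{\perp}$. Sorting flats into $W$-orbits and applying Frobenius reciprocity yields
\[
  A^p(W)^W \cong \bigoplus_{[X]} \bigl( H^p(M_{W_X}) \bigr)^{N_W(W_X)}.
\]
Now let $t$ be a special involution with $p = \dim V_{-1}$ and put $X_t = \bigcap_{r \in S(t)} V^r$. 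Since $\Phi_{-1}$ is a root system spanning $V_{-1}$, one checks that $X_t$ is the $(+1)$-eigenspace of $t$ (the orthogonal complement of $V_{-1}$), that $W_{X_t} = \langle S(t) \rangle$ with root system $\Phi_{-1}$, and that $t$ acts as $-\id$ on $X_t^{\perp}$; in particular $a_{S(t)}$ lies in the summand $A_{X_t}$. Consequently $\Av(a_{S(t)}) \neq 0$ if and only if, up to a nonzero scalar, $\Av_{N_W(W_{X_t})}(a_{S(t)}) \neq 0$ in the top-degree algebra $A^p(W_{X_t}) = H^p(M_{W_{X_t}})$ --- this is the reduction of part~\eqref{it:1} to a computation in $H^l(M_W)$, with $W$ replaced by the parabolic $W_{X_t}$.

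Part~\eqref{it:2} and Corollary~\ref{cor:fvc} then follow formally. The assignment $t \mapsto X_t$ is $W$-equivariant and injective on special involutions (one recovers $t$ from $X_t$ as $\id$ on $X_t$ and $-\id$ on $X_t^{\perp}$), so the $m$ elements $\Av(a_{S(t)})$, one per conjugacy class of special involution, lie in pairwise distinct summands of the decomposition of $A(W)^W$ above; granting \eqref{it:1} they are nonzero, hence linearly independent, and since $\dim A(W)^W = m$ by Brieskorn's Betti-number computation \cite[Thm.~7]{brieskorn:tresses} they form a basis of $A(W)^W$. So everything comes down to part~\eqref{it:1} for the five exceptional types.

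For that, working through $W$ of type $E_7$, $E_8$, $F_4$, $H_3$, $H_4$ (in an order respecting the two inclusions noted above), I would enumerate the conjugacy classes of special involutions $t$, compute the parabolic $W_{X_t}$, its normaliser $N_W(W_{X_t})$ and a base of $\Phi_{-1}$, and then inside $A^{\dim V_{-1}}(W_{X_t})$ reduce the explicit element $\Av_{N_W(W_{X_t})}(a_{S(t)})$ modulo the Orlik--Solomon relations to a normal form and check that it is nonzero; this is the calculation carried out in GAP3. The hard part will be scale rather than mathematics: the top-degree algebras are large --- for instance $\dim H^8(M_{E_8})$ is of order $10^8$ --- so one cannot afford to straighten an element presented as a sum over all of $W$, nor even to list a basis of $A^l(W)$. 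Making the straightening efficient --- exploiting, among other things, that $\dim H^l(M_W)^W \leq 1$ for irreducible $W$ (by Brieskorn's computation), so that a single suitably chosen coordinate of $\Av(a_{S(t)})$ already decides nonvanishing --- is precisely the algorithmic work described in \S\ref{sec:3}, and with it the remaining cases can be checked, completing the proof.
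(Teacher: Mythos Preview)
Your argument is correct and follows the same architecture as the paper: the Brieskorn/Lehrer--Solomon decomposition of $A^p(W)^W$ as $\bigoplus_{[X]} A^p(W_X)^{N_W(W_X)}$, the identification of $a_{S(t)}$ with a top-degree element in the summand indexed by $X_t$, and Brieskorn's Betti-number count $\dim A(W)^W = m$ to turn linear independence into a basis. The one substantive difference is in how you handle the summands with $X_t \neq 0$. You propose to verify $\Av_{N_W(W_{X_t})}(a_{S(t)}) \neq 0$ by direct computation for each conjugacy class of special involutions in each of the five exceptional types. The paper instead invokes the structural result of Pfeiffer--R\"ohrle \cite{pfeifferroehrle:special} (together with Howlett's complement $N_I = W_I \rtimes C_I$) that $w_I$ is special if and only if $C_I$ centralises $W_I$; this forces $A^{|I|}(W_I)^{N_I} = A^{|I|}(W_I)^{W_I}$, so the nonvanishing for every proper $I$ reduces to Theorem~\ref{thm:invariants1} applied to $W_I$, and the \emph{only} machine computation needed is the single full-rank one $\Av(a_S) \neq 0$ for each of $E_7$, $E_8$, $F_4$, $H_3$, $H_4$. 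Your route trades that structural input for a handful of additional, much smaller, computer checks; either way the bottleneck is the same top-degree $E_8$ calculation.
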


A proof of Theorem \ref{thm:fvc} is given in the next section. Roughly
speaking, the proof of \eqref{it:1} consists of reducing the assertion to
the case when $t$ is the longest element in $W$. This statement is then
checked case-by-case, using GAP3 for the exceptional groups. The assertion
in \eqref{it:2} follows from an inspection of the reduction used to prove
\eqref{it:1}.

%%%%%%%%%%%%%%%%%%%%%%%%%%%%%%%%%%%%%%%%%%%%%%%%
\subsection{A reduction}

The reduction of Theorem \ref{thm:fvc}\,\eqref{it:1} to the case of longest
words in top degree is based on a decomposition of the representation of $W$
on $A^p(W)$ as a sum of induced representations due to Lehrer and
Solomon~\cite{lehrersolomon:symmetric}.

Each subset $I$ of $S$ determines
\begin{itemize}
\item a standard parabolic subgroup $W_I$ of $W$ generated by $I$,
\item subspaces $V_I= \spn\{\,\alpha_s\mid s\in I\,\}$ and
  $X_I=\bigcap_{s\in I} V^s$ of $V$ such that $V\cong V_I \oplus X_I$, and
\item a subspace $A(W)_I= \spn \{\, a_{r_1} \dotsm a_{r_d} \mid V^{r_1}\cap
  \dots\cap V^{r_d} =X_I\,\} \subseteq A^{|I|}(W)$.
\end{itemize}
Let $N_I=N_W(W_I)$ be the normalizer of $W_I$ in $W$. It is easy to see that
$A(W)_I$ is an $N_I$-stable subspace of $A(W)$.

It is known that for subsets $I$ and $J$ of $S$, the following are
equivalent:
\begin{enumerate}
\item $W_I$ and $W_J$ are conjugate, and
\item $X_I$ is a $W$-translate of $X_J$.
\end{enumerate}
This motivates the notion of \emph{shapes} of $W$ that index the
Lehrer-Solomon decomposition \eqref{eq:decomp} of $A(W)$ as follows.
For $I, J\subseteq S$, define $I\sim J$ if $J=wIw\inverse$ for some $w$ in
$W$. This defines an equivalence relation on the power set of $S$.  A
\emph{shape} (for $W$) is a $\sim$-equivalence class. Let $\Lambda$ denote the
set of shapes and for each $\lambda\in \Lambda$, fix once and for all a
representative $I_\lambda \in \lambda$ and set $l_\lambda = |I_\lambda|$.

Lehrer and Solomon \cite[\S2]{lehrersolomon:symmetric} have shown that the
representation of $W$ on $A^p (W)$ decomposes as a direct sum of induced
representations:
\begin{equation}
  \label{eq:decomp}
  A^p (W) \cong \bigoplus_{\substack{\lambda\in \Lambda\\ l_\lambda=p}}
  \Ind_{N_{I_\lambda}}^W A(W)_{I_\lambda} .
\end{equation}

Notice that for $I\subseteq S$, $W_I$ is a Coxeter group with Coxeter
generating set $I$, and that $V_I$ is the complexification of the reflection
representation of $W_I$. Thus, we may consider the Orlik-Solomon algebra
$A(W_I)$ of $W_I$.  Clearly the action of $W_I$ on $A(W_I)$ extends to an
action of $N_I$, and so in particular to a representation of $N_I$ on the
top component $A^{|I|} (W_I)$. It follows easily from a standard property of
Orlik-Solomon algebras (see \cite[\S 3.1, Cor.~6.28]
{orlikterao:arrangements}) that there is an $N_I$-equivariant isomorphism
$A(W)_I \cong A^{|I|}(W_I)$. Therefore, summing over $p$, the decomposition
\eqref{eq:decomp} and Frobenius reciprocity yield
\begin{equation}
  \label{eq:dinv}
  A(W)^W=  A^l (W)^{W} + \sum_{\lambda\in
    \Lambda \setminus \{S\}} (A (W)_{I_\lambda}) ^{N_{I_\lambda}}
  \cong  A^l (W)^{W} \oplus \bigoplus_{\lambda\in
    \Lambda \setminus \{S\}} A^{l_\lambda} (W_{I_\lambda}) ^{N_{I_\lambda}}.
\end{equation}

The decomposition \eqref{eq:dinv} reduces the computation of $A(W)^W$ to
that of $A^l(W)^W$ and $A^{|I|} (W_I)^{N_I}$, for $I$ a proper subset of
$S$. We show below that the non-zero summands are indexed by the set of
conjugacy classes of special involutions, that each non-zero summand is
one-dimensional, and that this decomposition is just the decomposition of
$A(W)^W$ into one-dimensional subspaces given by the basis in Corollary
\ref{cor:fvc}.

%%%%%%%%%%%%%%%%%%%%%%%%%%%%%%%%%%%%%%%%%%%%%%%%
\subsection{Top degree invariants}
Consider first the summand $A^l (W)^{W}$ in \eqref{eq:dinv}. Following
Richardson \cite{richardson:involutions}, we say that a subset $I\subseteq S$
satisfies the $(-1)$-condition if $W_I$ contains an element that acts as
$-1$ on $V_I$. Let $w_I$ denote the longest element in $W_I$ with respect to
the length function determined by $S$. Obviously $I$ satisfies the
$(-1)$-condition if and only if each irreducible factor of $W_I$ does.
In addition, it is straightforward to check that, for
$W_I$ irreducible, $I$ satisfies the $(-1)$-condition if and only if
$w_I$ is equal to $-\id_{V_I}$ (see \cite[\S1]{richardson:involutions}). It
follows that in general, $I$ satisfies the $(-1)$-condition if and only if
$w_I$ is equal to $-\id_{V_I}$.

Taking $I=S$, it is clear that if $S$ satisfies the $(-1)$-condition, then
$w_S$ is a special involution in $W$. Conversely, Felder and
Veselov~\cite{felderveselov:coxeter} have
observed that if $W$ is irreducible and $S$ does not satisfy the
$(-1)$-condition, then $w_S$ is a not a special involution. It is immediate
from the definition that an involution $t\in W$ is special if and only if
the components of $t$ in each irreducible factor of $W$ are special. It
follows that in general, $S$ satisfies the $(-1)$-condition if and only if
$w_S$ is a special involution in $W$.

As noted above, Brieskorn has computed the Poincar\'e polynomials of the
graded vector spaces $A(W)^W$ for all irreducible $W$. It follows from this
computation that $\dim A^l(W)^W=1$ or $0$ according as to whether or not $S$
satisfies the $(-1)$-condition. It follows that in general, $S$ satisfies
the $(-1)$-condition if and only if $A^l(W)^W\ne 0$, and if so, then
$A^l(W)^W$ is one-dimensional.

To summarize, the following are equivalent for any finite Coxeter group:
\begin{itemize}
\item $A^l(W)^W\ne 0$.
\item The longest element in $W$ acts as minus the identity in the
  reflection representation.
\item The longest element in $W$ is a special involution.
\end{itemize}

Notice that for $I\subseteq S$, we have $S(w_I) = I$ and
hence $a_I= a_{S(w_I)}$.
We can now state our main theorem.

\begin{theorem}
  \label{thm:invariants1}
  Suppose $W$ is a finite Coxeter group with Coxeter generating set
  $S$ of size $|S|=l$. The following are equivalent:
  \begin{enumerate}
  \item $A^l(W)^W\ne 0$. \label{it:invit1}
  \item The longest element in $W$ is a special
    involution. \label{it:invit2}
  \item $\Av(a_S)\ne 0$. \label{it:invit3}
  \end{enumerate}
  If these conditions hold, then $A^l(W)^W$ is one-dimensional with
  generator $\Av (a_S)$.
\end{theorem}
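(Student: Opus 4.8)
The plan is to establish the cyclic chain of implications \eqref{it:invit1} $\Rightarrow$ \eqref{it:invit2} $\Rightarrow$ \eqref{it:invit3} $\Rightarrow$ \eqref{it:invit1}, leaning on the summary of equivalences already assembled in the text (which identifies \eqref{it:invit1} and \eqref{it:invit2} with the statement that $S$ satisfies the $(-1)$-condition, i.e. $w_S = -\id_V$) and the fact that $\Av(a_S) \in A^l(W)^W$. Two of the three implications are then nearly formal. The implication \eqref{it:invit3} $\Rightarrow$ \eqref{it:invit1} is immediate, since $\Av(a_S)$ lies in $A^l(W)^W$ by construction, so if it is non-zero then $A^l(W)^W \neq 0$. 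And once \eqref{it:invit1} $\Leftrightarrow$ \eqref{it:invit2} is granted from the preceding discussion (Brieskorn's Poincar\'e polynomial computation together with the Felder--Veselov observation on irreducible $W$ and the reduction to irreducible factors), the only substantive work is the implication \eqref{it:invit2} $\Rightarrow$ \eqref{it:invit3}.

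For that implication I would argue as follows. Assume the longest element $w_S$ is a special involution, equivalently $w_S = -\id_V$, equivalently $-1 \in W$. I want to show $\Av(a_S) \neq 0$, and since the preceding discussion tells us $A^l(W)^W$ is at most one-dimensional in this case, it suffices to exhibit a $W$-invariant linear functional on $A^l(W)$ that does not vanish on $a_S$; averaging $a_S$ will then necessarily be non-zero. The natural candidate is a suitable coordinate functional relative to a monomial basis of $A^l(W)$. Concretely, fix a chamber and let $\alpha_1, \dots, \alpha_l$ be the corresponding simple roots with reflections $s_1, \dots, s_l \in R$; the monomial $a_S = \pm a_{s_1} \cdots a_{s_l}$ is a non-zero element of the top-degree space $A^l(W)$, which is one-dimensional as an abstract vector space only when $l \le 1$ — so in general one must be more careful: $A^l(W)$ has dimension equal to the number of chambers divided by... no. Rather, $\dim A^l(W)$ equals the top Betti number of $M_W$, which by Orlik--Solomon factorization is the number of elements $w \in W$ of length $l$ counted with the appropriate exponents — in any case it is typically large. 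So the real point is to understand the $W$-action on $A^l(W)$ well enough to see that its space of invariants, known to be one-dimensional, is spanned by $\Av(a_S)$ rather than by the average of some other monomial.

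Here is where I expect the main obstacle, and the route I would take around it. By Brieskorn's theorem, under hypothesis \eqref{it:invit2} we know $\dim A^l(W)^W = 1$; so $\Av(a_S) \neq 0$ is equivalent to the assertion that $a_S$ has non-zero image in the space of coinvariants $A^l(W)_W$, equivalently that the $W$-invariant projection of $a_S$ is non-zero, equivalently that there exists some $W$-invariant functional $\phi$ on $A^l(W)$ with $\phi(a_S) \neq 0$. I would produce $\phi$ by dualizing: $A^l(W) \cong H^l(M_W)$ and, since $M_W$ has the homotopy type of a finite CW complex of dimension $l$ whose top cohomology pairs with compactly supported cohomology (or: by Poincar\'e--Lefschetz duality on the manifold $M_W$ of real dimension $2l$, with $H^l$ self-dual up to the middle), there is a $W$-equivariant perfect pairing on $H^l(M_W)$, or at least a $W$-equivariant pairing $H^l(M_W) \otimes H^l(M_W) \to \BBC$ detecting $a_S$. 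Alternatively, and more in the combinatorial spirit of the paper, I would use that the top-degree monomials $a_{w} := a_{s_{i_1}}\cdots a_{s_{i_l}}$ indexed by "no broken circuit" sets form a basis of $A^l(W)$ and that $W$ permutes the chamber-counting data; the coefficient of the "fundamental" basis vector in $\Av(a_S)$ can be computed as a signed count of $w \in W$ with $w \cdot a_S = \pm a_S$, and the sign issue — whether these contributions cancel — is the crux. The hypothesis $w_S = -\id$ is exactly what forces the signs to line up: conjugation by $w_S$ fixes every reflection, so $w_S$ acts on $a_S$ by $(-1)^l \cdot (\text{sign of reversing the } l \text{ factors}) = (-1)^l (-1)^{\binom{l}{2}}$, and one checks this is $+1$ precisely in the $(-1)$-condition cases that appear on Brieskorn's list, so the stabilizer of $a_S$ (up to sign) contributes coherently rather than cancelling. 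Making this sign bookkeeping rigorous for all the relevant types, and confirming it against Brieskorn's Poincar\'e polynomials (which is where the case-by-case check and the GAP3 computation enter, as the introduction forecasts), is the part I expect to require the most care; the structural reductions \eqref{it:invit1} $\Leftrightarrow$ \eqref{it:invit2} and \eqref{it:invit3} $\Rightarrow$ \eqref{it:invit1} are essentially bookkeeping given the text already in hand.
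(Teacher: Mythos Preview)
Your overall architecture matches the paper's: the equivalence of \eqref{it:invit1} and \eqref{it:invit2} is taken from the preceding discussion, the implication \eqref{it:invit3} $\Rightarrow$ \eqref{it:invit1} is trivial, and the substantive content is \eqref{it:invit2} $\Rightarrow$ \eqref{it:invit3}. Where the proposal goes wrong is in the attempted conceptual argument for that last implication.

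The sign computation is incorrect. When $w_S = -\id_V$, the element $w_S$ is \emph{central} in $W$, so conjugation by $w_S$ fixes every reflection, and therefore $w_S \cdot a_r = a_{w_S r w_S^{-1}} = a_r$ for each $r \in R$. Hence $w_S \cdot a_S = a_S$ with no sign whatsoever; there is no factor of $(-1)^l$ and no ``reversing'' permutation. The action of $W$ on $A(W)$ is through conjugation on reflections, not through the linear action on roots, so the fact that $w_S$ negates each root is invisible at the level of the generators $a_r$. Thus the triviality of the $w_S$-action yields no information toward $\Av(a_S) \neq 0$ and certainly does not ``force the signs to line up'' for the remaining group elements. (Your formula $(-1)^l(-1)^{\binom{l}{2}}$ also fails the first test case: for $l=1$, type $A_1$, it gives $-1$.)

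More broadly, the idea of reading off the coefficient of $a_S$ in $\Av(a_S)$ as a signed count over $\{\, w \in W : w \cdot a_S = \pm a_S\,\}$ does not work: for a generic $w$ the monomial $a_{S.w}$ is not a broken circuit basis element, and when it is rewritten in that basis it may contribute to the $a_S$-coefficient even though $w \cdot a_S \neq \pm a_S$. The Poincar\'e--Lefschetz suggestion is also not a route to a $W$-invariant pairing on $H^l(M_W)$: on the open manifold $M_W$ duality pairs $H^l$ with $H^l_c$, not with itself.

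The paper gives no uniform conceptual proof of \eqref{it:invit2} $\Rightarrow$ \eqref{it:invit3}. It reduces to irreducible $W$, lists the types for which $w_S$ is special, quotes Felder--Veselov for the classical types and $I_2(2n)$, and then verifies $F_4$, $H_3$, $H_4$, $E_7$, $E_8$ by explicitly computing $\omega = \sum_{w \in W} a_S . w$ in the broken circuit basis using GAP3; Section~\ref{sec:3} is devoted to making that computation feasible. You do gesture toward this at the end, but you frame the machine check as ``making the sign bookkeeping rigorous''. There is no sign bookkeeping that survives; the case-by-case computation \emph{is} the proof.
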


\begin{proof}
  The equivalence of \eqref{it:invit1} and \eqref{it:invit2} is explained
  above, and it is clear that if $\Av (a_S)\ne 0$, then
  $A^l(W)^W\ne 0$.  Thus, it remains to show that if $w_S$ is a special
  involution, then $\Av (a_S)\ne 0$. It follows from the
  preceding discussion that without loss of generality we may assume that
  $W$ is irreducible. Then $w_S$ is a special involution if and only if $W$
  is of type $A_1$, $B_n$, $D_{2n}$, $F_4$, $E_7$, $E_8$, $H_3$, $H_4$, or
  $I_2(2n)$. Felder and Veselov \cite{felderveselov:coxeter} have
  established the statement for all types other than $E_7$, $E_8$, $F_4$,
  $H_3$, and $H_4$. We have checked these remaining instances by machine
  computations. The most challenging cases are when $W$ is of type $E_7$ and
  $E_8$, requiring sophisticated programming techniques and intricate
  reductions to deal with the relations in $A(W)$. Details regarding the
  implementation of these computations are given in the next section.
\end{proof}

The summands in \eqref{eq:dinv} not equal $A^l(W)^W$ are
described in the next lemma.

\begin{lemma}\label{lem:topI}
  Let $I\subseteq S$ with $|I|=p$ and consider $A^p(W_I)^{N_I}$.
  \begin{enumerate}
  \item Suppose $I$ does not satisfy the $(-1)$-condition. Then
    $A^p(W_I)^{N_I}=0$.
  \item Suppose $I$ satisfies the $(-1)$-condition and $w_I$ is a not
    special involution in $W$. Then $A^p(W_I)^{N_I}=0$.
  \item Suppose $I$ satisfies the $(-1)$-condition and $w_I$ is a special
    involution in $W$. Then $A^p(W_I)^{N_I}$ is one-dimensional and
    $\Av(a_I) \ne 0$.
  \end{enumerate}
\end{lemma}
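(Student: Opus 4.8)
The plan is to reduce everything about $A^p(W_I)^{N_I}$ to the already-established Theorem \ref{thm:invariants1} applied to the Coxeter system $(W_I, I)$, together with a careful analysis of how the larger group $N_I$ acts on the top component $A^p(W_I)$. The key observation is that $N_I$ acts on $A^p(W_I)$ through its quotient $N_I/W_I$, and that $A^p(W_I)$ is itself one-dimensional over $W_I$-invariants exactly when $I$ satisfies the $(-1)$-condition. So first I would note that, since $A^p(W_I)$ is the top component of the Orlik--Solomon algebra of the Coxeter group $W_I$ of rank $p$, the $W_I$-module $A^p(W_I)$ is isomorphic to $\det{}^{-1}$ twisted appropriately — more precisely, $w \in W_I$ acts on the line $\BBC a_I$ by $\det(w|_{V_I})$, up to the sign conventions in the Orlik--Solomon relations — and in any case $A^p(W_I)^{W_I}$ is one-dimensional precisely when $w_I = -\id_{V_I}$, i.e. when $I$ satisfies the $(-1)$-condition, and is zero otherwise. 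This handles the $W_I$-invariants; the point of the lemma is to pass from $W_I$ to $N_I$.

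For part (1): if $I$ does not satisfy the $(-1)$-condition, then already $A^p(W_I)^{W_I} = 0$ by the discussion preceding Theorem \ref{thm:invariants1} (applied to $W_I$), and since $A^p(W_I)^{N_I} \subseteq A^p(W_I)^{W_I}$, we get $A^p(W_I)^{N_I} = 0$. For parts (2) and (3) we may assume $I$ satisfies the $(-1)$-condition, so $A^p(W_I)^{W_I} = \BBC \Av_{W_I}(a_I)$ is one-dimensional and nonzero (here $\Av_{W_I}$ is averaging over $W_I$). Now $N_I/W_I$ acts on this line, and $A^p(W_I)^{N_I}$ is nonzero — hence equal to the whole line — if and only if $N_I$ acts trivially on it, i.e. if and only if every $n \in N_I$ fixes $\Av_{W_I}(a_I)$ up to sign being $+1$. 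The scalar by which $n$ acts is $\det(n|_{X_I^\perp})$ in suitable terms, or concretely: writing $n I n^{-1} = I$ as a permutation of $S$, the element $n$ sends $a_I$ to $\pm a_{nIn^{-1}} = \pm a_I$, and one must track whether this sign is compatible with the $W_I$-averaging.

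The heart of the argument, and the step I expect to be the main obstacle, is to show that this sign condition on $N_I$ is exactly equivalent to $w_I$ being a \emph{special} involution \emph{in $W$} (not merely in $W_I$). The implication I would pursue: $w_I$ acts as $-\id$ on $V_I$ and trivially on $X_I$, so $w_I$ is an involution in $W$ with $(-1)$-eigenspace $V_I$; it is special in $W$ iff for every root $\alpha \in \Phi$, one of its projections to $V_I$ or $X_I$ is proportional to a root. The projection to $V_I$ is automatically (proportional to) a root when $\alpha \in \Phi$ has support meeting $I$ appropriately — the subtlety is precisely about roots "sticking out" of $V_I$, and controlling these is governed by $N_I$. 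I would argue that the failure of speciality produces an element of $N_I$ acting by $-1$ on the invariant line (killing the $N_I$-invariants), giving part (2); and conversely that speciality forces every element of $N_I$ to act by $+1$, giving part (3), with $\Av(a_I) = \Av_W(a_I) \neq 0$ following because its restriction to the summand $(A(W)_{I})^{N_I} \cong A^p(W_I)^{N_I}$ under \eqref{eq:dinv} is the nonzero element $\Av_{N_I}(a_I)$. This equivalence between a combinatorial property of roots (speciality) and a representation-theoretic property of the $N_I$-action on a line is where the real work lies, and I would expect to need the classification of shapes together with Richardson's analysis of the $(-1)$-condition, quite possibly with a short case-check for the exceptional types paralleling the one in Theorem \ref{thm:invariants1}.
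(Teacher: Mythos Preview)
Your overall architecture matches the paper's proof: part~(1) via $A^p(W_I)^{N_I} \subseteq A^p(W_I)^{W_I} = 0$; parts~(2)--(3) by analysing how $N_I/W_I$ acts on the one-dimensional line $A^p(W_I)^{W_I}$; and $\Av(a_I)\ne 0$ via the directness of the sum in \eqref{eq:dinv}. Where you leave a gap is precisely the step you flag as ``where the real work lies,'' namely the equivalence between speciality of $w_I$ in $W$ and triviality of the $N_I$-action on $A^p(W_I)^{W_I}$. The paper does not derive this equivalence from scratch; it imports two structural results. First, Howlett's theorem that $W_I$ has a canonical complement $C_I$ in $N_I$, so the $N_I$-action on $A^p(W_I)^{W_I}$ is governed entirely by $C_I$. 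Second, the result of Pfeiffer--R\"ohrle that, under the $(-1)$-condition, $w_I$ is special in $W$ if and only if $C_I$ centralizes $W_I$. With these in hand, part~(3) is immediate: $C_I$ centralizes $W_I$, hence fixes $\Av_I(a_I)$. For part~(2) one still must show that ``$C_I$ does not centralize $W_I$'' forces the action on the line to be by $-1$; the paper does this by a case-by-case inspection (using Howlett's explicit description of $C_I$) showing that some $c \in C_I$ acts on $I$ as a transposition of two nodes of the Coxeter graph of $W_I$, whence $c(a_I) = -a_I$ and $c(\Av_I(a_I)) = -\Av_I(a_I)$. Your sketch anticipates a case-check of this flavour but does not identify these two cited ingredients, which are what make the argument go through.

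One small correction: your remark that $w \in W_I$ acts on ``the line $\BBC a_I$'' by $\det(w|_{V_I})$ is not right as stated --- $\BBC a_I$ is not $W_I$-stable, since $w \cdot a_I = a_{wIw^{-1}}$ and the conjugates $wsw^{-1}$ for $s\in I$ need not lie in $I$. The correct one-dimensional object is $A^p(W_I)^{W_I}$, spanned by $\Av_I(a_I)$, which you do invoke afterwards.
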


\begin{proof}
  By Theorem \ref{thm:invariants1} we may assume that $I$ is a proper subset
  of $S$.

  If $I$ does not satisfy the $(-1)$-condition, then $A^{p} (W_{I})
  ^{W_{I}}=0$, by Theorem \ref{thm:invariants1}, and $A^{p} (W_{I}) ^{N_{I}}
  \subseteq A^{p} (W_{I}) ^{W_{I}}$, so $A^{p} (W_{I}) ^{N_{I}}=0$.

  In order to handle the cases when $I$ does satisfy the
  $(-1)$-condition, we need to recall some facts about the structure of the
  normalizer $N_I$ due to Howlett and Pfeiffer-R\"ohrle. First, Howlett
  \cite{howlett:normalizers} has shown that $W_I$ has a canonical complement
  in $N_I$, denoted here by $C_I$. Second, Pfeiffer and R\"ohrle
  \cite{pfeifferroehrle:special} have shown, under the assumption that
  $I$ satisfies the $(-1)$-condition, $w_I$ is a special
  involution in $W$ if and only if $C_I$ centralizes $W_I$.

  Now suppose $I$ satisfies the $(-1)$-condition and $w_I$ is a not special
  involution in $W$. Then $A^{p} (W_{I}) ^{N_{I}} \subseteq A^{p} (W_{I})
  ^{W_{I}}$, $A^{p} (W_{I}) ^{W_{I}}$ is one-dimensional with generator
  $\Av_{I}(a_I)$, where $\Av_{I} : A(W_I) \to A(W_I)^{W_I}$ denotes the averaging map for $W_I$,
  and $C_I$ does not centralize $W_I$.
  We may assume that $W$ is
  irreducible. Then it follows from the classification of
  irreducible finite Coxeter groups that $W_I$ has at most one component not
  of type $A$, and because $I$ satisfies the $(-1)$-condition, each
  component of type $A$ also satisfies the $(-1)$-condition and so is of
  type $A_1$. Moreover, the component not of type $A$ must be of type $B_k$
  ($k\geq2$), $D_{2k}$ ($k\geq2$), $E_7$, or $H_3$. Considering these
  possibilities case-by-case using the description of $C_I$ in
  \cite{howlett:normalizers}, it can be checked that in all cases when $C_I$
  does not centralize $W_I$, the group $C_I$ contains an element $c$ that
  acts on $W_I$ as a graph automorphism that transposes two nodes of the
  Coxeter graph of $W_I$ and leaves the other nodes fixed.

  Thus the relations of $A(W)$ yield
  $c(a_I) = -a_I$ and so
  \[
  c\big(\Av_{I}( a_I) \big)= \Av_{I}(c( a_I)) =
  -\Av_{I}( a_I),
  \]
  showing that $\Av_{I}(a_I)$ is not invariant under $N_I$.
  Consequently,  $A^p(W_I)^{N_I} \ne A^p(W_I)^{W_I}$, whence $A^p(W_I)^{N_I} = 0$
  as $\dim A^p(W_I)^{W_I} = 1$.

  Finally, suppose $I$ does satisfy the $(-1)$-condition and $w_I$ is a special
  involution in $W$. Then $A^{p} (W_{I}) ^{N_{I}} \subseteq A^{p} (W_{I})
  ^{W_{I}}$, $A^{p} (W_{I}) ^{W_{I}}$ is one-dimensional with generator
  $\Av_{I}(a_I)$, and $C_I$ centralizes $W_I$. Hence, for all $c\in C_I$,
  $c\big(\Av_{I}(a_I) \big)= \Av_{I}(a_I)$
  and so $A^{p} (W_{I}) ^{N_{I}} =A^{p} (W_{I}) ^{W_{I}} \ne0$. To
  complete the proof, let $Y\subseteq W$ be a complete set of left
  $N_I$-coset representatives in $W$. Then
  \[
  \Av(a_I) = |Y|\inverse \sum_{y\in Y} y\big(
  \Av_{I}(a_I) \big) \in \sum_{y\in Y} y\big( A(W)_I \big).
  \]
  But now the sum $\sum_{y\in Y} y\big( A(W)_I \big)$ in \eqref{eq:dinv} is
  direct and $y\big( \Av_{I}(a_I) \big) \in y\big( A(W)_I \big)$
  for $y\in Y$, so $\Av(a_I) \ne 0$.
\end{proof}

%%%%%%%%%%%%%%%%%%%%%%%%%%%%%%%%%%%%%%%%%%%%%%%%
\subsection{Proof of Theorem \ref{thm:fvc}} Richardson
\cite{richardson:involutions} has shown that $t\in W$ is an involution if
and only if there is a subset $I\subseteq S$ that satisfies the
$(-1)$-condition such that $w_I$ is conjugate to $t$. Therefore, if $t$ is a 
special involution, there is a subset $I\subseteq S$ that satisfies the
$(-1)$-condition such that $t$ is conjugate to $w_I$. But then $w_I$ is a
special involution and $\Av(a_{S(t)}) = \pm \Av(a_I)$, so it
follows from Theorem \ref{thm:invariants1} and Lemma \ref{lem:topI} that
$\Av(a_{S(t)}) \ne 0$.

Finally, it follows from the decomposition \eqref{eq:dinv}, Theorem
\ref{thm:invariants1}, and Lemma \ref{lem:topI}, that $A(W)^W$ is spanned by
the elements $\Av(a_I)$ where $I$ runs over the subsets of $S$ that
satisfy the $(-1)$-condition and for which $w_I$ is a special
involution. More precisely, if $\Lambda_{-1}$ denotes the set of shapes consisting of
subsets that satisfy the $(-1)$-property and $\Lambda_1$ denotes the set of shapes
consisting of subsets $I$ such that $C_I$ centralizes $W_I$, then $\Lambda_{-1} \cap
\Lambda_1$ indexes the set of conjugacy classes of special involutions and $\{\,
\Av(a_{I_\lambda}) \mid \lambda\in \Lambda_{-1} \cap \Lambda_1\,\}$ is a basis of
$A(W)^W$.

%%%%%%%%%%%%%%%%%%%%%%%%%%%%%%%%%%%%%%%%%%%%%%%%%%%%%%%%%%%%%%%%%%%%%%
%%%%%%%%%%%%% \S Computational and algorithmic aspects
%%%%%%%%%%%%%%%%%%%%%%%%%%%%%%%%%%%%%%%%%%%%%%%%%%%%%%%%%%%%%%%%%%%%%%

\section{Computational and algorithmic aspects}
\label{sec:3}

We have implemented the relations for the Orlik-Solomom algebra $A(W)$ with
the use of the computer algebra system GAP3 \cite{gap3} and the CHEVIE
package \cite{chevie}. The papers
\cite{bishopdouglasspfeifferroehrle:computations},
\cite{bishopdouglasspfeifferroehrle:computationsII}, and
\cite{bishopdouglasspfeifferroehrle:computationsIII} contain some of the
details of this implementation. In this section, we describe some
refinements of our earlier techniques that allow us to complete the
computations used in the proof of Theorem \ref{thm:invariants1}.

%%%%%%%%%%%%%%%%%%%%%%%%%%%%%%%%%%%%%%%%%%%%%%%%
\subsection{The broken circuit bases of $A(W)$}
The broken circuit bases of $A(W)$ is a computationally efficient basis to
use for machine calculations for individual Coxeter groups that is
compatible with the decomposition of $A(W)$ arising from (\ref{eq:decomp}).
For later reference we briefly recall the construction of this basis.

Recall the fixed linear order on $R=\{\, r_1, \dots, r_n\,\}$.
Recall that a subset $T\subseteq R$ is \emph{independent} if $\codim
(\bigcap_{r\in T} V_{\BBR}^r) = |T|$ and \emph{dependent} otherwise. A
\emph{circuit} is a subset of $R$ that is minimally linearly dependent. That
is, it is linearly dependent, but any proper subset is linearly
independent. A \emph{broken circuit} is a subset of $R$ that is obtained
from a circuit by deleting the maximal element with respect to the fixed
linear order on $R$. Thus, broken circuits are subsets of the form
$\{r_{i_1}, \dots, r_{i_p} \}$ where there is a $j>i_p$ so that $\{ r_{i_1},
\dots, r_{i_p}, r_j \}$ is a circuit. A subset of $R$ is
\emph{$\chi$-independent} if it does not contain a broken circuit.

It is convenient to identify $R=\{r_1, \dots, r_n\}$ with the set $\{1,
\dots, n\}$ and to identify ordered subsets of $R$ with words in the
alphabet $\{1, \dots, n\}$. If $T= i_1\cdots i_p$ is such a word, then
adjectives applied to $\{r_{i_1}, \dots, r_{i_p}\}$ are also applied to
$T$. For example, $T = i_1\cdots i_p$ is \emph{independent} if the subset
$\{r_{i_1}, \dots, r_{i_p} \}$ of $R$ is independent.

Write $a_i$ instead of $a_{r_i}$ for the corresponding algebra generator of
$A(W)$. Given a word $T= i_1\cdots i_p$ of positive integers less than or
equal $n$, define an element, $a_T$, in $A^p(W)$ by $a_T= a_{i_1} \dotsm
a_{i_p}$
(in analogy with the definiton of $a_T$ for a \emph{subset} $T$ of $R$ in
Section~\ref{sec:2}). Let $\CB$ denote the set of all $\chi$-independent words $i_1
\cdots i_p$ such that $i_1<\dots <i_p$. It is shown in \cite[\S
3.1]{orlikterao:arrangements} that $\{\, a_T\mid T\in \CB \,\}$ is a basis
of $A(W)$, called there a \emph{broken circuit} basis. A broken circuit
basis is a common basis for the subspaces $A(W)^p$ and $A(W)_I$ of $A(W)$
and is compatible with the isomorphisms $A(W)_I \cong A^{|I|}(W_I)$ for
$I\subseteq S$.

When working in GAP3 it is more convenient to let groups act on the
right. Thus, in this section we consider the right action of $W$ on $A(W)$
that satisfies $a_T. w= a_{T.w}$, where if $T= i_1 \cdots i_p$, then $T.w =
j_1 \cdots j_p$, where $w\inverse r_{i_1} w =r_{j_1}$, \dots, $w\inverse
r_{i_p}w = r_{j_p}$. Let $|T.w| = j_1' \cdots j_p'$ be a rearrangement of
$T.w$ in increasing order and let $\epsilon (T,w)$ be the sign of a
permutation that is needed to sort the word $T.w$ in increasing order. Then
$a_T. w= a_{T.w} = \epsilon (T,w) a_{|T.w|}$.

For $a \in A(W)$, let us denote by $\overline{a}$ the coordinate vector of
$a$ with respect to the broken circuit basis $\{a_T \mid T \in
\mathcal{B}\}$ of $A(W)$, i.e., an explicit list of coefficients $\beta_T
\in \BBC$ such that $a = \sum_T \beta_T a_T$.  In the application, most
coefficients $\beta_T$ are zero and the list can be stored as a sparse list
consisting of the non-zero coefficients only.

The proof of Theorem \ref{thm:invariants1} boils down to computing
\[
\omega= a_S.\sum_{w \in W} w = \sum_{w \in W} a_S.w.
\]
The task of checking whether $\omega \neq 0$ reduces to
\begin{enumerate}
\item computing the image $a_S.w$ of $a_S$ under each group element $w \in
  W$,
\item expressing each image $a_S.w$ as $\overline{a_S.w}$ in terms of the
  broken circuit basis,
\item computing $\overline{\omega} = \sum_{w \in W} \overline{a_S.w}$.
\end{enumerate}
While this looks straightforward (and in the case of small groups $W$ it is
straightforward), it can be challenging for higher rank Coxeter groups of
exceptional type, i.e., for $E_7$ and $E_8$. The difficulties arise from
\begin{itemize}
\item the order of $W$ and hence the number of images $a_S.w$ that need to
  be determined,
\item the need to explicitly express an element $a_T$ for arbitrary subsets
  $T$ of $R$ as a linear combination $\overline{a_T}$ of the broken circuit
  basis,
\item the need to efficiently represent the $|W|$ elements of the broken
  circuit basis of $A(W)$.
\end{itemize}
We address all these points in turn in the following subsections.

%%%%%%%%%%%%%%%%%%%%%%%%%%%%%%%%%%%%%%%%%%%%%%%%
\subsection{Decomposing $\mathbf W$}
\label{sec:decomposing-w}

In all cases, it turns out that the element $\omega$ has tiny
support in the broken circuit basis of $A(W)$ relative to the size of $W$.
In contrast, this need not be the case
for intermediate results, and the time
it takes to compute $\overline{\omega}$ depends subtly on the order in which
various steps are taken. We chose to separate the calculation as follows.

The standard parabolic subgroup $W_J$ has a distinguished set $D$ of left
coset representatives, consisting of the unique elements of minimal length
in their coset.  As each element $w \in W$ has a decomposition $w = x \cdot
w'$ for uniquely determined elements $x \in D$, $w' \in W_J$, in the group
algebra of $W$ we can write
\[
\sum_{w \in W} w = \Bigl(\sum_{x \in D} x\Bigr) \cdot \Bigl(\sum_{w' \in
  W_J} w'\Bigr).
\]
In fact, there are parabolic subgroups
\[
\{1\} = W_0 < W_1 < \dots < W_l = W,
\]
such that $W_{j-1}$ is a maximal standard parabolic subgroup of $W_j$, and
$W_j = D_j W_{j-1}$ for the distinguished set $D_j$ of left coset
representatives of $W_{j-1}$ in $W_j$, for $j = 1, \dots, l$. Thus each
element $w \in W$ can be written as
\[
w = x_l \cdots x_2 \cdot x_1,
\]
for uniquely determined elements $x_j \in D_j$, $j = 1, \dots, l$. Hence, in
the group algebra of~$W$,
\[
\sum_{w \in W} w = \Bigl(\sum_{x_l \in D_l} x_l\Bigr) \cdots \Bigl(\sum_{x_2
  \in D_2} x_2\Bigr) \cdot \Bigl(\sum_{x_1 \in D_1} x_1\Bigr),
\]
and we can compute
\[
\omega = \Bigl(\Bigl(\dotsm\Bigl(a_S.\sum_{x_l \in D_l} x_l\Bigr)
\dotsm\Bigr) \cdot \sum_{x_2 \in D_2} x_2\Bigr) \cdot \sum_{x_1 \in D_1}
x_1.
\]
In this way, instead of $\prod |D_j| = |W|$, only $\sum |D_j|$ images of
algebra elements under group elements need to be computed and converted into
the basis.

In practice, we use the chain of parabolics induced by the labelling
of generators $S = \{s_1, \dots, s_l\}$ in CHEVIE, with $W_j =
\langle s_1, \dots, s_j \rangle$, $j = 1, \dots,l$.
In the case of $E_8$ with Coxeter diagram
\begin{center}
  \begin{tikzpicture}[scale=0.6]
    \node (1) at (0,0) {$_1$};
    \node (2) at (2,1) {$_2$};
    \node (3) at (1,0) {$_3$};
    \node (4) at (2,0) {$_4$};
    \node (5) at (3,0) {$_5$};
    \node (6) at (4,0) {$_6$};
    \node (7) at (5,0) {$_7$};
    \node (8) at (6,0) {$_8$};
    \draw (1) -- (3) -- (4) -- (5) -- (6) -- (7) -- (8);
    \draw (2) -- (4);
  \end{tikzpicture},
\end{center}
this reduces the number of image
calculations from a formidable $|W| = 696,729,600$ to a mere $\sum |D_j| =
356$.  However, the algebra elements now are linear combinations of words,
rather than just words.

Set $q_j = a_S.\sum_{x_l \in D_l} x_l \dotsm \sum_{x_j \in D_j} x_j$,
for $j = 1, \dots, l$.  Then $\omega = q_1$. Now $\overline{\omega}$ is
computed in $l$ steps, for $j$ from $l$ down to $1$, as follows.  Assuming
that $\overline{q_{j+1}}$ is known, one obtains
\[
\overline{q_j} = \overline{\overline{q_{j+1}}.\sum_{x \in D_j}x} = \sum_{x
  \in D_j} \overline{\overline{q_{j+1}}.x}.
\]
Here, if $\overline{q_{j+1}} = \sum_T \beta_T a_T$, then
\[
\overline{\overline{q_{j+1}}.x} = \sum_T \beta_T \overline{a_T.x} = \sum_T
\beta_T\, \epsilon(T,x)\, \overline{a_{|T.x|}}.
\]
Initially, this requires us to compute $\overline{a_S}$. For this, it turns
out to be convenient to choose an order on $R$ that makes $a_S$ a basis
element, or at least close to one.

%%%%%%%%%%%%%%%%%%%%%%%%%%%%%%%%%%%%%%%%%%%%%%%%
\subsection{Rewrite Rules.}

In order to express arbitrary elements of $A(W)$ in terms of the broken
circuit basis, we need to be able to express an element $a_T$, for an
arbitrary word $T = i_1\cdots i_p$ with $i_1< \cdots <i_p$, in terms of the
broken circuit basis, i.e., to compute the coefficients of $\overline{a_T}$.
First we note that the broken circuit basis has the following useful
\emph{Schreier property}: if $T = i_1\cdots
i_p$ is in $\CB$, then $T'$ is in $\CB$ for any \emph{prefix} $T' = i_1\dots
i_k$ of $T$ ($k \leq p$). Thus, if $T$ is a strictly increasing word and
$T'$ is a proper prefix of $T$ such that $a_{T'}$ is not a basis element,
then neither is $a_T$.  Using the relations in $A(W)$, we compute
$\overline{a_T}$ as follows.
\begin{enumerate}
\item Find the minimal $k$ such that $i_1\cdots i_k$ contains a broken
  circuit.  If no such $k$ exists, then $a_T$ is a basis element of $A(W)$ (by
  definition).
\item Otherwise, find the maximal index $u$ such that $i_1\cdots i_k \,u$ is
  a circuit (such $u$ exists, is larger than $i_k$, and can easily be
  identified by computing the rank of the corresponding matrix of root
  vectors).
\item If $u$ occurs in  $T$, then $a_T = 0$.  Otherwise, using the relations in
  $A(W)$,
  \[
  i_1 i_2 \cdots i_k = \sum_j (-1)^{k-j} (i_1 \cdots
  \hat{i}_j \cdots i_k) u
  \]
  and we can compute $\overline{a_T}$ recursively as
  \[
  \overline{a_T} = \sum_j (-1)^{k-j} \overline{(i_1 \cdots \hat{i}_j
    \cdots i_k) u (i_{k+1} \cdots i_p)}.
  \]
\end{enumerate}
This process must terminate since (in the lexicographic order of words in
$\{1, \dots, n\}$) all of the replacement terms on the right hand side are
strictly bigger than the original word $T$.

%%%%%%%%%%%%%%%%%%%%%%%%%%%%%%%%%%%%%%%%%%%%%%%%
\subsection{Constructing and Managing a Basis.}
The above procedure for expressing an element of $A(W)$ in the broken
circuit basis depends on an efficient procedure for distinguishing words of
$\CB$ from other words. The definition of a broken circuit basis is not
particularly well suited for this purpose: testing whether a subword of a
word $T$ is in $\CB$ in isolation is not straightforward, and the cost of
testing all subwords of $T$ is exponential.  This task can be carried out
more efficiently in the presence of some pre-computed data.  If, for
example, a complete list of words in $\CB$ is known, then deciding whether
an arbitrary increasing word $T$ is in $\CB$ or not is a simple lookup
operation.  However, as $|\CB| = |W|$, such a list is expensive to
compute and to store for larger groups.

Here, taking advantage of the Schreier property, we use a rooted, directed
acyclic graph $\Gamma$ on a small number of nodes to represent the words in
$\CB$.  The root node, labelled $0$, corresponds to the empty word.  All
other nodes are labelled by the positive integers indexing the generators
$a_1$, \dots, $a_n$ of $A(W)$. In this graph, directed paths starting at the
root node represent words in $\CB$. To decide whether a given word lies
in $\CB$, one simply traces the word (reading from left to right), beginning
at the root node, through the graph.  If at some point no edge leads to a
node labelled by the next letter, the corresponding prefix (and hence the
word) is not in $\CB$ (whereas all prefixes so far were in $\CB$).

The graph $\Gamma$ for $W$ of type $A_3$ with the reflections linearly
ordered by $s_{12}< s_{23}< s_{34} < s_{13}<s_{24}< s_{14}$ is given in
Figure \ref{fig:3}. For example the words $2$, $2\, 4$, and $2\, 4\, 6$ are
in $\CB$ and the word $2\, 4\, 5$ is not.

We construct such a graph as follows. First of all, it is useful to represent
the subsets of $\{1, \dots, n\}$ with at most $l+1$ elements as nodes in a rooted,
ranked tree $\Upsilon$, with nodes labeled by $\{0, 1, \dots, n\}$. Here,
the root node, labeled $0$, corresponds to the empty set and has rank
$0$. Each other node represents the subset consisting of the labels of the
nodes along the unique path back to the root node (excluding the root
node). Figure \ref{fig:1} shows this tree for the case when $l=3$ and
$n=6$. For example, the last circled node labelled by $5$ with rank $3$
represents the subset $\{3,4,5\}$, obtained from the path $0$-$3$-$4$-$5$.
The formal definition of $\Upsilon$ is easily extracted from this example.

\begin{figure}[htb]
  \centering
  \begin{forest}
    for tree={l=15mm,s sep=1mm,inner sep=0mm,minimum size=4.3mm}
    [$_0$ [$_1$ [$_2$, red, draw=red, circle [$_3$, red, draw=red, circle
    [$_4$, blue, draw=blue] [$_5$, blue, draw=blue] [$_6$, blue, draw=blue]
    ] [$_4$, blue, draw=blue [$_5$, blue, draw=blue] [$_6$, blue, draw=blue]
    ] [$_5$, red, draw=red, circle [$_6$, blue, draw=blue] ] [$_6$, red,
    draw=red, circle] ] [$_3$ [$_4$, red, draw=red, circle [$_5$, blue,
    draw=blue] [$_6$, blue, draw=blue] ] [$_5$, red, draw=red, circle [$_6$,
    blue, draw=blue] ] [$_6$] ] [$_4$ [$_5$, red, draw=red, circle [$_6$,
    blue, draw=blue] ] [$_6$] ] [$_5$, red, draw=red, circle [$_6$, blue,
    draw=blue] ] [$_6$] ] [$_2$ [$_3$, red, draw=red, circle [$_4$, red,
    draw=red, circle [$_5$, blue, draw=blue] [$_6$, blue, draw=blue] ]
    [$_5$, blue, draw=blue [$_6$, blue, draw=blue] ] [$_6$, red, draw=red,
    circle] ] [$_4$ [$_5$, red, draw=red, circle [$_6$, blue, draw=blue] ]
    [$_6$] ] [$_5$ [$_6$] ] [$_6$] ] [$_3$ [$_4$, red, draw=red, circle
    [$_5$, red, draw=red, circle [$_6$, blue, draw=blue] ] [$_6$, blue,
    draw=blue] ] [$_5$ [$_6$] ] [$_6$] ] [$_4$ [$_5$ [$_6$] ] [$_6$] ] [$_5$
    [$_6$] ] [$_6$] ]
  \end{forest}
  \caption{$\Upsilon$: Subsets of $\{1,2,3,4,5,6\}$}
  \label{fig:1}
\end{figure}

In the case at hand, the nodes in $\Upsilon$ can be decorated to encode
information about the sets of reflections in $R$ that they ultimately
represent. In Figure \ref{fig:1}, square nodes indicate dependent sets and
circled nodes indicate independent sets that contain a broken circuit. The
remaining nodes, by definition, correspond to the broken circuit basis, and
form a subtree, $\Upsilon_{\CB}$, shown in Figure \ref{fig:2}.

\begin{figure}[htb]
  \centering
  \begin{forest}
    for tree={l=15mm,s sep=2mm,inner sep=0mm,minimum size=4.3mm} [$_0$ [$_1$
    [$_3$ [$_6$] ] [$_4$ [$_6$] ] [$_6$] ] [$_2$ [$_4$ [$_6$] ] [$_5$ [$_6$]
    ] [$_6$] ] [$_3$ [$_5$ [$_6$] ] [$_6$] ] [$_4$ [$_5$ [$_6$] ] [$_6$] ]
    [$_5$ [$_6$] ] [$_6$] ]
  \end{forest}
  \caption{$\Upsilon_{\CB}$: Words in $\CB$}
  \label{fig:2}
\end{figure}

The tree $\Upsilon_{\CB}$ can alternately be constructed recursively by
successively adding the nodes with labels $1, 2, \dots, n$ to the tree
consisting of the root node $0$ only. Let us call
the \emph{tree at stage $m$} the
tree consisting of nodes with labels $0, 1, \dots, m$.  The tree at stage
$m$ is constructed from the tree at stage $m-1$ by checking, for \emph{each
  node} in the stage $m-1$ tree, whether it can be extended by a node with
label $m$, and if so, then by adding a node with label $m$.

To decide whether the node with word $i_1 \cdots i_k$ in the tree at stage
$m-1$ can be extended by a node labelled $m$, we use the following
observation: Suppose $i_1\cdots i_k$ is a word in $\CB$ with $i_k <
m$. Then the word $i_1\cdots i_k\, m$ \emph{contains} (we don't claim it
\emph{is}) a broken circuit if and only if there is an index $u>m$ such
that the word $i_1 \cdots i_k\, m\, u$ is dependent. Indeed, since
$i_1\cdots i_k$ does not contain a broken circuit, if $i_1\cdots i_k\,
m$ contains a broken circuit, then this broken circuit must contain
$m$.

In the example in Figure \ref{fig:2}, many subtrees appear repeatedly in the
tree $\Upsilon_{\CB}$ and carry redundant information. This suggests storing the information
in the form of a smaller directed acyclic graph with the property that
each rooted path in this smaller graph corresponds to a node in the original
tree with the same rooted path.

Such a graph $\Gamma$ is constructed from $\Upsilon_{\CB}$ by starting with
the leftmost maximal path in $\Upsilon_{\CB}$, then adjoining the other
maximal paths (say from left to right), then adjoining any missing paths of
length $l-1$, then adjoining any missing paths of length $l-2$, and so
on. Continuing the example of $W$ of type $A_3$, the graph $\Gamma$ has $9$
nodes (as opposed to $24$ in the orignal tree) and is given in Figure
\ref{fig:3}.

\begin{figure}[htb]
  \centering
  \begin{tikzpicture}[xscale=0.5,yscale=0.5]
    \node (0) at (2, 4) {$_0$}; \node (1) at (-3, 2) {$_1$}; \node (2) at
    (-1, 2) {$_2$}; \node (3) at (1, 2) {$_3$}; \node (4) at (3, 2) {$_4$};
    \node (31) at (-2, 0) {$_3$}; \node (41) at (0, 0) {$_4$}; \node (5) at
    (2, 0) {$_5$}; \node (6) at (0, -2) {$_6$}; \foreach \a/\b in
    {0/1,0/2,0/3,0/4,0/5,1/31,1/41,2/41,2/5,3/5,4/5} \draw (\a) -- (\b);
    \foreach \a in {0,1,2,3,4,5,6,31,41} \draw (\a) -- (6);
  \end{tikzpicture}
  \caption{The graph $\Gamma$ for $W$ of type $A_3$}
  \label{fig:3}
\end{figure}

Finally, the rooted paths in $\Gamma$ can be enumerated by a recursive depth
first traversal. Thus, the graph $\Gamma$ can be alternately be constructed
in the same fashion as $\Upsilon_{\CB}$, by successively adding the nodes
with label $1, 2, \dots, n$, and carefully tracking of the prefixes
represented by nodes with the same label.

Naturally, the graph $\Gamma$ depends on the chosen total order on $R$. In
the case of $E_8$, with the order of roots and reflections as produced by
CHEVIE, the graph $\Gamma$ has $1,207,608$ nodes and $15,552,964$ edges,
representing the $|W| = 696,729,600$ basis elements.

%%%%%%%%%%%%%%%%%%%%%%%%%%%%%%%%%%%%%%%%%%%%%%%%%%%%%%%%%%%%%%%%%%%%%%
%%%%%%%%%%%%% Acknowledgments
%%%%%%%%%%%%%%%%%%%%%%%%%%%%%%%%%%%%%%%%%%%%%%%%%%%%%%%%%%%%%%%%%%%%%%

\bigskip

\noindent {\bf Acknowledgments}: The research of this work was supported by
the Simons Foundation (Grant \#245399 to J.M.~Douglass)
and by
the DFG (Grant \#RO 1072/16-1 to G.~R\"ohrle).
J.M.~Douglass would like to acknowledge that
some of this material is based upon work supported by (while serving at) the
National Science Foundation.
Part of the research for this paper was carried out during a stay at
the Mathematical Research Institute Oberwolfach supported by the
``Research in Pairs'' program.

%%%%%%%%%%%%%%%%%%%%%%%%%%%%%%%%%%%%%%%%%%%%%%%%%%%%%%%%%%%%%%%%%%%%%%
%%%%%%%%%%%%% bibliography
%%%%%%%%%%%%%%%%%%%%%%%%%%%%%%%%%%%%%%%%%%%%%%%%%%%%%%%%%%%%%%%%%%%%%%
\bibliographystyle{plain}

%%%%%%%%%%%%%%%%%%%%%%%%%%%%%%%%%%%%%%%%%%%%%%%%%%%%%%%%%%%%%%%%%%%%%%
\end{document}